\newcommand*{\mailto}[1]{\href{mailto:#1}{#1}}
\numberwithin{equation}{section}
\newtheorem{theorem}[equation]{Theorem}
\newtheorem{lemma}[equation]{Lemma}
\newtheorem{proposition}[equation]{Proposition}
\theoremstyle{definition}
\theoremstyle{remark}
\newcommand*{\N}{\mathbb{N}}
\newcommand*{\Z}{\mathbb{Z}}
\newcommand*{\R}{\mathbb{R}}
\newcommand*{\dif}{\mathrm{d}}
\DeclarePairedDelimiter\abs{\lvert}{\rvert}
\DeclarePairedDelimiter\card{\lvert}{\rvert}
\DeclarePairedDelimiter\norm{\lVert}{\rVert}
\DeclarePairedDelimiter\Set\{\}
\DeclarePairedDelimiterX\innerp[2]{\langle}{\rangle}{#1,#2}
\begin{document}
\subjclass[2010]{05D10 (Primary) 20D60 (Secondary)}
\title{Corners over quasirandom groups}
\author{Pavel Zorin-Kranich}
\address{Universit\"at Bonn\\
  Mathematisches Institut\\
  Endenicher Allee 60\\
  53115 Bonn\\
  Germany
}
\email{\mailto{pzorin@math.uni-bonn.de}}
\urladdr{\url{http://www.math.uni-bonn.de/people/pzorin/}}
\keywords{Gowers box norm, quasirandom group}
\begin{abstract}
Let $G$ be a finite $D$-quasirandom group and $A \subset G^{k}$ a $\delta$-dense subset.
Then the density of the set of side lengths $g$ of corners
\[
\Set{(a_{1},\dotsc,a_{k}),(ga_{1},a_{2},\dotsc,a_{k}),\dotsc,(ga_{1},\dotsc,ga_{k})} \subset A
\]
converges to $1$ as $D\to\infty$.
\end{abstract}
\maketitle

\section{Notation and background}
In this article we will be concerned with a version of the multidimensional Szemer\'edi theorem over quasirandom groups.
In order to state our results and put them into historical perspective, we begin by introducing appropriate notation.
Let $G$ be a countable group and let $T_{i}$ be the commuting $G$-actions on $G^{k}$ given by
\[
T_{i}^{g}(a_{1},\dotsc,a_{k}) := (a_{1},\dotsc,a_{i-1},ga_{i},a_{i+1},\dotsc,a_{k}).
\]
We write $T_{[j,i]}^{g} := T_{j}^{g}\dotsm T_{i}^{g}$.
A \emph{(BMZ) corner} in $G^{k}$ is a configuration of the form
\begin{equation}
\label{eq:corner}
C(g,\vec a) := \Set{\vec a, T_{[1,1]}^{g}\vec a, \dotsc, T_{[1,k]}^{g}\vec a},
\qquad
g \in G,
\vec a\in G^{k}.
\end{equation}
We call $\vec a$ the \emph{base point} and $g$ the \emph{side length} of a corner.
A corner is called \emph{non-trivial} if its side length is distinct from $1_{G}$.

BMZ corners are not the only natural configurations generalizing the corners that appear in the commutative situation $G=\Z$.
However, they seem to be the best behaved ones.
Resolving a conjecture from \cite{MR1481813}, Austin has recently proved that if $G$ is amenable and $A\subset G^{k}$ has positive upper Banach density, then $A$ contains (many) non-trivial BMZ corners.
This extends several previous results.
The case $G=\Z$ is the mulidimensional Szemer\'edi theorem due to Furstenberg and Katznelson, from which the original Szemer\'edi theorem on arithmetic progressions in $\Z$ can be deduced using the projection map $\Z^{k}\to\Z$, $\vec a \mapsto a_{1}+\dotsb+a_{k}$.
The cases $k=2$ of all these results have been known prior to the general cases as indicated in the table below.
\begin{center}
\begin{tabular}{lccc}
\toprule
 & $\Z$ & $\Z^{k}$ & $G^{k}$ \\
\midrule
$k=2$ & \cite{MR0051853} & \cite{MR0369299} & \cite{MR1481813} \\
$k\geq 3$ & \cite{MR0369312} & \cite{MR531279} & \cite{arXiv:1309.4315} \\
\bottomrule
\end{tabular}
\end{center}
A finitary version of the multidimensional Szemer\'edi theorem reads as follows.
\begin{theorem}[{\cite[Theorem 11]{MR3177376}}]
\label{thm:corners}
Let $\delta>0$ and $k\in\N$.
Then there exist $\epsilon>0$ and $N\in\N$ such that for every finite group $G$ with $\card{G}>N$ every subset $A\subset G^{k}$ with $\card{A}>\delta \card{G}^{k}$ contains at least $\epsilon \card{G}^{k+1}$ BMZ corners.
\end{theorem}
This theorem is an easy consequence of Gowers's hypergraph removal lemma \cite{MR2373376}, and we reproduce the proof here in order to motivate both the definition of the BMZ corners and the change of variables that will be used in the proof of Theorem~\ref{thm:multiple-weak-mixing}.
\begin{proof}[Proof of Theorem~\ref{thm:corners}]
Here and later we denote omission of the $i$-th coordinate in a vector by a subscript as follows:
\[
\vec x_{(i)} = (x_{0},\dotsc,x_{i-1},x_{i+1},\dotsc,x_{k})
\]
For $i=0,\dotsc,k$ consider the changes of variables
\[
N^{[0,k]}_{i}(\vec x_{(i)})
:=
(x_{0},x_{0}x_{1},\dotsc,(x_{0}\cdots x_{i-1}),(x_{i+1}\cdots x_{k})^{-1},\dotsc,x_{k}^{-1}).
\]
They are related among each other as follows: if $x\in G^{k+1}$ and $g=x_{0}\dotsm x_{k}$, then $T_{[1,i]}^{g} N^{[0,k]}_{0} (x_{(0)}) = N^{[0,k]}_{i} (x_{(i)})$.
In particular, corners are precisely the configurations
\[
\Set{ N^{[0,k]}_{i} (x_{(i)}), i=0,\dotsc,k},
\quad
x\in G^{k+1}.
\]
Define a $(k+1)$-partite $k$-uniform hypergraph $F$ with vertex sets $X_{0}=\dotsb=X_{k}=G$ by
\[
\vec x_{(i)} \in F
:\iff
N^{[0,k]}_{i}(\vec x_{(i)}) \in A.
\]
Then a corner corresponds to a simplex in the hypergraph $F$.
If there were fewer than $\epsilon \card{G}^{k+1}$ simplices in $F$, then by the hypergraph removal lemma \cite[Theorem 10.1]{MR2373376} the hypergraph $F$ could be made simplex-free by removing fewer than $\delta \card{G}^{k}$ edges.
But if we remove the element of $A$ corresponding to each removed edge and repeat the construction of $F$, we obtain an even smaller hypergraph that still contains simplices (since each remaining member of $A$ gives rise to a trivial corner), a contradiction.
\end{proof}
A similar argument works for $A\subset\Phi^{k}$, where $\Phi\subset G$ is a set with $\card{\Phi^{-1}\Phi} \leq C \card{\Phi}$, with constants depending on $C$.
This proves a version of Theorem~\ref{thm:corners} over infinite amenable groups that admit a F\o{}lner sequence satisfying the Tempelman condition.
This argument does not seem to extend easily to general F\o{}lner sequences.

\section{Main result}
The problem of finding arithmetic progressions, and later more general configurations, in dense subsets of amenable groups has been transferred to ergodic theory by Furstenberg, who reformulated Szemer\'edi's theorem as a multiple recurrence theorem and gave it a new proof \cite{MR0498471}.
An important special case of the multiple recurrence theorem occurs for weakly mixing actions, when its conclusion can be strengthened to the extent that corners with almost every possible side length can be found.

A (necessarily infinite) group is called \emph{weakly mixing} if it has no non-trivial finite-dimensional unitary representations.
For such groups many combinatorial results can be strengthened substantially, see e.g.\ \cite{MR3177376}.
A quantitative notion of weak mixing has been introduced by Gowers \cite{MR2410393}.
A group is called \emph{$D$-quasirandom} if it has no non-trivial unitary representation of dimension less than $D$.
Our result tells that in dense subsets over quasirandom groups one can find corners of almost every side length.
\begin{theorem}
\label{thm:quasirandom-corners}
Let $\delta>0$ and $k\in\N$.
Then for every finite $D$-quasirandom group $G$ and every subset $A\subset G^{k}$ with $\card{A}>\delta \card{G}^{k}$ we have
\[
\card{\Set{ g\in G : \card{\Set{ \vec a \in G^{k} : C(g,\vec a)\subset A}} > \epsilon(\delta,k) \card{G}^{k}/2}} > (1-o_{D\to\infty;\delta,k}(1)) \card{G},
\]
where $\epsilon(\delta,k)$ is the quantity from Theorem~\ref{thm:corners}.
\end{theorem}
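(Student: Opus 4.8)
For $g\in G$ put
\[
\Lambda(g):=\E_{\vec a\in G^{k}}\prod_{j=0}^{k}1_{A}(ga_{1},\dots,ga_{j},a_{j+1},\dots,a_{k})=\frac{|\{\vec a\in G^{k}:C(g,\vec a)\subset A\}|}{|G|^{k}},
\]
so that $\Lambda_{\mathrm{avg}}:=\E_{g}\Lambda(g)$ is the total corner density. Quasirandomness forces $|G|$ to be large (a non-trivial $D$-quasirandom group has $|G|\ge D^{2}+1$, the trivial group being vacuous), so once $D$ is large in terms of $\delta,k$ Theorem~\ref{thm:corners} applies and gives $\Lambda_{\mathrm{avg}}\ge\epsilon(\delta,k)$. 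By Chebyshev's inequality,
\[
\frac{\bigl|\{g\in G:\Lambda(g)\le\epsilon(\delta,k)/2\}\bigr|}{|G|}\le\frac{\bigl|\{g:|\Lambda(g)-\Lambda_{\mathrm{avg}}|\ge\epsilon(\delta,k)/2\}\bigr|}{|G|}\le\frac{4\operatorname{Var}_{g}\Lambda(g)}{\epsilon(\delta,k)^{2}},
\]
so the whole theorem reduces to $\operatorname{Var}_{g}\Lambda(g)=o_{D\to\infty;\delta,k}(1)$. I would prove this by establishing, by induction on $k$, the more flexible claim that for \emph{any} $f_{0},\dots,f_{k}\colon G^{k}\to[-1,1]$ the weighted count $\Lambda_{\vec f}(g):=\E_{\vec a}\prod_{j=0}^{k}f_{j}(ga_{1},\dots,ga_{j},a_{j+1},\dots,a_{k})$ obeys $\operatorname{Var}_{g}\Lambda_{\vec f}(g)\le C_{k}D^{-c_{k}}$ for constants $C_{k},c_{k}>0$; the case $k=1$, where $\Lambda_{\vec f}(g)=\E_{a}f_{0}(a)f_{1}(ga)$, is a short direct computation via quasirandomness.

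\textbf{Peeling off the last coordinate.} The key structural feature is that the vertex $P_{j}$ involves the translated coordinate $ga_{i}$ only for $i\le j$; in particular the last free variable $a_{k}$ appears translated (as $ga_{k}$) only in the top vertex $P_{k}$. Decompose $f_{k}$ along its last argument, $f_{k}=\bar f_{k}+f_{k}^{\perp}$ with $\bar f_{k}(x_{1},\dots,x_{k-1})=\E_{x_{k}}f_{k}(x_{1},\dots,x_{k})$ and $\E_{x_{k}}f_{k}^{\perp}(x_{1},\dots,x_{k})=0$ (rescaling to keep all bounds $\le1$). For the $\bar f_{k}$-part the factor $\bar f_{k}(ga_{1},\dots,ga_{k-1})$ no longer involves $a_{k}$, so carrying out the $a_{k}$-average first exhibits this contribution as $\E_{a_{k}}\Lambda_{\vec h^{(a_{k})}}(g)$ where, for each fixed $a_{k}$, $\vec h^{(a_{k})}$ is a family of $[-1,1]$-valued functions on $G^{k-1}$ (the weight $\bar f_{k}\cdot f_{k-1}$ being absorbed into the last one) and $\Lambda_{\vec h^{(a_{k})}}$ is a $(k-1)$-dimensional weighted corner count. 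Since $\operatorname{Var}_{g}\bigl(\E_{a_{k}}X_{a_{k}}(g)\bigr)\le\E_{a_{k}}\operatorname{Var}_{g}X_{a_{k}}(g)$ by convexity of $t\mapsto t^{2}$, the inductive hypothesis bounds the variance of the $\bar f_{k}$-part by $C_{k-1}D^{-c_{k-1}}$.

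\textbf{The mean-zero part and the quasirandom input.} It remains to control $R(g):=\E_{\vec a}\bigl(\prod_{j=0}^{k-1}f_{j}(P_{j})\bigr)f_{k}^{\perp}(ga_{1},\dots,ga_{k})$, the factor $f_{k}^{\perp}$ having mean zero in its last variable. The substitution $u_{i}=ga_{i}$, $h=g^{-1}$ turns this into $R(g)=\E_{\vec u}\bigl(\prod_{j=0}^{k-1}f_{j}(u_{1},\dots,u_{j},hu_{j+1},\dots,hu_{k})\bigr)f_{k}^{\perp}(\vec u)$, in which the mean-zero factor $f_{k}^{\perp}(\vec u)$ no longer sees $h$. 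One then expands $\operatorname{Var}_{g}R\le\E_{h}R(g)^{2}$ and processes the free variables $u_{1},\dots,u_{k}$ (together with their doubles coming from the square) by Cauchy--Schwarz, iterating through the coordinates and splitting at each stage the relevant $f_{j}$ into its coordinate-average plus a mean-zero remainder exactly as above. After these steps — bookkept cleanly by the Gowers box norm on $G^{k}$ — the estimate is reduced to a bounded (depending only on $k$) number of expressions of the form $\E_{x,y\in G}\phi_{1}(x)\phi_{2}(y)\phi_{3}(xy)$ with $\phi_{1},\phi_{2},\phi_{3}\colon G\to[-1,1]$ built from translates of the $f_{j}$ and of $f_{k}^{\perp}$, in which the surviving mean-zero factor makes $\E\phi_{1}=0$. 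The defining mixing property of quasirandomness (Gowers) — $|\E_{x,y}\phi_{1}(x)\phi_{2}(y)\phi_{3}(xy)|\le D^{-1/2}$ whenever $\E\phi_{1}=0$ — then bounds each such expression by $D^{-1/2}$, giving $\operatorname{Var}_{g}R\le C_{k}D^{-c_{k}}$ and completing the induction.

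\textbf{Main obstacle.} The crux is the last step. Because the side length $g$ is common to all vertices $P_{0},\dots,P_{k}$ — it threads through them in a staircase pattern — a single Cauchy--Schwarz does not isolate the mean-zero factor: after one step it is still multiplied by a product of $g$-translated $f_{j}$'s, and bounding that product by $1$ in $L^{\infty}$ loses the entire gain. The iteration through all $k$ coordinates, splitting off coordinate-averages at every stage so that one reaches genuinely one-dimensional group averages only at the bottom of the recursion, is what makes the quasirandom mixing inequality applicable; keeping track of the $2^{O(k)}$ terms produced, and checking that the $D^{-1/2}$ saving indeed survives all the intermediate Cauchy--Schwarz steps, is the real work. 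The required uniformity of the error in $\delta$ and $k$ is then routine.
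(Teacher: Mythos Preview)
Your reduction of Theorem~\ref{thm:quasirandom-corners} to a deviation estimate for the multicorrelation $\Lambda_{\vec f}(g)$ is exactly the paper's reduction to Theorem~\ref{thm:multiple-weak-mixing} (stated there in $L^{1}$ rather than $L^{2}$, which is immaterial). Your treatment of the structured piece $\bar f_{k}$ --- absorbing it into $f_{k-1}$ and dropping to a $(k-1)$-dimensional count averaged over $a_{k}$ --- is also correct and parallels the paper's handling of the structured output of the regularity lemma.

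The gap is in the remainder $f_{k}^{\perp}$. Having mean zero in the last coordinate is far weaker than having small Gowers box norm: a function can satisfy $\E_{x_{k}}f_{k}^{\perp}(x_{1},\dots,x_{k})=0$ identically and still have $\|f_{k}^{\perp}\|_{\square^{k}}=1$. Concretely, after one van der Corput / Cauchy--Schwarz step the function standing in for $f_{k}^{\perp}$ becomes $F^{h}(x)=f_{k}^{\perp}(x)\,f_{k}^{\perp}(T_{[1,k]}^{h}x)$, and $\E_{x_{k}}F^{h}$ is a genuine correlation, not zero. So the mean-zero hypothesis does \emph{not} propagate through the iteration, and there is no reason the terminal triple products $\E_{x,y}\phi_{1}(x)\phi_{2}(y)\phi_{3}(xy)$ should inherit a factor with $\E\phi_{1}=0$; Gowers' mixing inequality is then inapplicable. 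Further splitting the other $f_{j}$'s into coordinate-averages plus remainders only multiplies the number of terms without restoring the needed cancellation.

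The paper closes this gap differently. It first proves (Proposition~\ref{prop:box-norm-controls-multicorrelations}) that $\int_{g}|c_{g}|\le\min_{i}\|N^{[0,k]}_{i}f_{i}\|_{\square^{k}}+C_{k}D^{-2^{-k}}$, applying the quasirandom input (Lemma~\ref{lem:density-mean-ergodic}) at every level of that induction rather than only at the bottom. Then $N^{[0,k]}_{k}f_{k}$ is decomposed, but via the \emph{weak regularity lemma} (Lemma~\ref{lem:regularity}): a bounded sum of products of functions each missing one coordinate, plus a remainder $F_{u}$ with $\|F_{u}\|_{\square^{k}}\le\epsilon$. The structured summands reduce to $k-1$ just as in your $\bar f_{k}$ step; the remainder is now small in the norm that Proposition~\ref{prop:box-norm-controls-multicorrelations} actually controls. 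The regularity lemma here is not bookkeeping --- it is what produces a remainder of the correct type. (Incidentally, because of this the paper obtains only $o_{D\to\infty;k}(1)$, not the polynomial bound $C_{k}D^{-c_{k}}$ you assert.)
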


The case $k=2$ has been previously shown in \cite{arxiv:1410.5385,arxiv:1503.08746}, and we refer to those articles for further discussion of why BMZ corners are natural.

Since the set $\Set{ \vec a \in G^{k} : C(g,\vec a)\subset A}$ has density at least $\epsilon$ on average (over $g$) by Theorem~\ref{thm:corners}, it suffices to show that its density is usually close to the average.
We formulate this in the language of dynamical systems as a multiple weak mixing property.

\begin{theorem}
\label{thm:multiple-weak-mixing}
Let $G$ be a compact $D$-quasirandom group, $k\geq 0$, and $f_{i} : G^{k}\to [-1,1]$, $i\in [0,k]$.
Consider the multicorrelation sequence
\[
c_{g} := \int_{G^{k}} \prod_{i=0}^{k} f_{i}T_{[1,i]}^{g}.
\]
Then
\[
\int_{g} \abs[\Big]{ c_{g} - \int_{h}c_{h} } = o_{D\to\infty;k}(1).
\]
\end{theorem}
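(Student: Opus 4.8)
First reduce the $L^1$ fluctuation to a variance: since each $f_i$ takes values in $[-1,1]$ the sequence $c_g$ is real, so by Cauchy--Schwarz
\[
\int_{g}\Big|c_{g}-\int_{h}c_{h}\Big|\le\Big(\int_{g}c_{g}^{2}-\Big(\int_{h}c_{h}\Big)^{2}\Big)^{1/2},
\]
while conversely, as $c_g$ is $1$-bounded, the right-hand variance is $\le 2\int_g|c_g-\int_h c_h|$. So the $L^1$ and $L^2$ formulations are equivalent up to absolute constants, and it suffices to show $\int_g c_g^2-(\int_h c_h)^2=o_{D\to\infty;k}(1)$; this equivalence is what lets an induction on $k$ feed on its own conclusion.

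The only use of $D$-quasirandomness is the near-constancy of matrix coefficients: if $G$ is compact $D$-quasirandom, $\rho$ a unitary representation of $G$ on a Hilbert space $H$, $P$ the orthogonal projection onto the $\rho$-fixed vectors, and $\|\Phi\|,\|\Psi\|\le1$, then
\[
\int_{g}\big|\langle\Phi,\rho(g)\Psi\rangle-\langle\Phi,P\Psi\rangle\big|^{2}\le D^{-1}
\]
(decompose $(1-P)H$ into isotypic components, each a multiple of an irreducible of dimension $\ge D$, and apply Schur orthogonality; note $\langle\Phi,P\Psi\rangle=\int_h\langle\Phi,\rho(h)\Psi\rangle$). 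For $k=1$ this settles the theorem: the substitution $b=ga_1$ turns $c_g$ into the matrix coefficient $\langle f_1,\lambda(g)f_0\rangle$ of the left regular representation $\lambda$ on $L^2(G)$, whose invariant vectors are the constants, whence $\int_g|c_g-\int_h c_h|\le D^{-1/2}$.

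For $k\ge2$ the sequence $c_g$ is no longer a single matrix coefficient — $g$ enters through the $k$ nested shifts $T_{[1,1]}^g,\dots,T_{[1,k]}^g$ — so I would run a van der Corput / PET induction to linearise it. Expanding $\int_g c_g^2$ and using that each $T_{[1,i]}^g$ is measure-preserving, one rewrites the variance as the average over $g$ of a product of two multicorrelations over $G^k\times G^k$; a Cauchy--Schwarz step in one coordinate direction of $G^k$, controlled by the corresponding Gowers box norm, removes one of the shifts $T_{[1,i]}^g$ at the cost of passing to a freshly doubled system; after finitely many such steps — a number bounded in terms of $k$ — the dependence on $g$ is that of a single matrix coefficient of a representation built by tensoring copies of $\lambda$, which the previous display bounds by a power of $D^{-1}$, giving a cumulative bound of the shape $D^{-2^{-O(k)}}$. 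Equivalently one can phrase the induction intrinsically, proving the conclusion for every multicorrelation attached to a chain $\emptyset=I_0\subseteq I_1\subseteq\cdots\subseteq I_k$ of coordinate subsets of $G^m$ by induction on a suitable complexity parameter such as the size of the largest $I_i$; the base case, all $I_i$ equal, makes $c_g$ constant after a change of variables.

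The crux is the bookkeeping in this last step: the order and the variables of the Cauchy--Schwarz applications must be chosen so the configuration genuinely simplifies, which forces one to work in a class of configurations — chains, and after the reductions merely partially ordered patterns of shifts, on products of several copies of $G$ — broad enough to be closed under the reductions while carrying a complexity parameter that strictly decreases. It is also here that non-commutativity bites: the $T_i^g$ commute with one another, but $T_i^g$ and $T_i^u$ do not, so the substitutions that absorb a group element into a coordinate, harmless in the abelian case, can create higher-order shifts and require reselecting coordinates. Verifying that each reduction is lossless up to a fixed power of $D^{-1}$ — that the relevant Gowers box norm really dominates the defect — is the remaining technical point.
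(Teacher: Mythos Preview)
Your base case $k=1$ is fine and matches the paper. The gap is in the inductive step: the van der Corput / Cauchy--Schwarz machinery you describe does not terminate in a pure matrix coefficient of a tensor power of $\lambda$. What it actually yields (and this is exactly the content of Proposition~\ref{prop:box-norm-controls-multicorrelations}) is a bound of the form
\[
\int_{g}\Big|\int_{G^{k}}\prod_{i=0}^{k}f_{i}T_{[1,i]}^{g}\Big|\ \le\ \min_{i}\|N^{[0,k]}_{i}f_{i}\|_{\square^{k}}+C_{k}D^{-2^{-k}}.
\]
The box norm on the right does not come with any power of $D^{-1}$; it is simply a feature of the functions. Each van der Corput step eliminates one coordinate of $g$-action but introduces a fresh averaging variable $h$, and when you invoke quasirandomness on that $h$-average you are left with a higher box norm of $f_{i}$, not with something that tends to $0$ as $D\to\infty$. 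Your sentence ``that the relevant Gowers box norm really dominates the defect'' is where the argument actually breaks: the box norm \emph{is} the defect, and for structured $f_{i}$ it is of order $1$ regardless of $D$. Consequently your claimed quantitative rate $D^{-2^{-O(k)}}$ for the variance is not obtainable by this route, and is in fact stronger than anything the paper proves.

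The paper closes this gap with a second, independent ingredient that your proposal omits: the weak hypergraph regularity lemma. One decomposes $N^{[0,k]}_{k}f_{k}=F_{s}+F_{u}$ with $\|F_{u}\|_{\square^{k}}\le\epsilon$ and $F_{s}$ a bounded sum of products $\prod_{j<k}F_{j,l}$, each factor independent of two coordinates. For the uniform part $F_{u}$, Proposition~\ref{prop:box-norm-controls-multicorrelations} gives the contribution $\epsilon+o_{D\to\infty}(1)$. For the structured part, the extra missing coordinate makes each $(N^{[0,k]}_{k})^{-1}F_{j,l}$ invariant under $T_{(j,k]}$, so the corresponding multicorrelation collapses to one of length $k-1$ (averaged over the last coordinate), and the inductive hypothesis applies. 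Letting $\epsilon\to 0$ after $D\to\infty$ gives the qualitative $o_{D\to\infty;k}(1)$, with no explicit rate because the regularity lemma controls $|L|$ only in terms of $\epsilon$. So the scheme is: box-norm estimate for the pseudorandom piece \emph{plus} regularity-lemma reduction to lower $k$ for the structured piece --- not a single PET-style descent.
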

In other words, the multicorrelation sequence converges to its average \emph{in density} as $D\to\infty$.
Here and later, compact groups are equipped with the normalized Haar measure and $fT=f\circ T$ denotes the composition of functions $f$ and $T$.
\begin{proof}[Proof of Theorem~\ref{thm:quasirandom-corners} assuming Theorem~\ref{thm:multiple-weak-mixing}]
Let $f_{0}=\dotsb=f_{k}=1_{A}$.
Then the multicorrelation sequence $c_{g}$ counts the elements $\vec a\in G^{k}$ such that $C(g,\vec a)\subset A$.
On the other hand, by Theorem~\ref{thm:corners} we have $\int_{g}c_{g} > \epsilon(\delta,k)$ provided $\card{G}>D$ is large enough.
\end{proof}

\section{Tools}
In this and the next section $G$ always denotes a compact group with normalized Haar measure.
Quasirandomness will be used in the following form.
\begin{lemma}[{\cite[Corollary 3]{MR3303180}}]
\label{lem:density-mean-ergodic}
Let $V$ be a (real or complex) Hilbert space equipped with an (orthogonal or unitary) right action of a compact $D$-quasirandom group $G$ and let $P$ be the projection onto the invariant subspace.
Then for every $u,v\in V$ we have
\[
\int_{G} \abs{ \innerp{u}{vg} - \innerp{Pu}{Pv} }^{2} \dif g \leq D^{-1} \norm{u}^{2} \norm{v}^{2}.
\]
\end{lemma}
This result has been stated for left actions in \cite{MR3303180}, the version above follows by considering either the adjoint action or the opposite group.

We use the following version of the van der Corput lemma.
\begin{lemma}[{\cite[Lemma 1]{arxiv:1503.08746}}]
\label{lem:vdC}
Let $V$ be a (real or complex) Hilbert space and $u:G\to V$ a measurable function.
Then for every $v\in V$ with $\norm{v}\leq 1$ we have
\[
\int \abs{\innerp{v}{u(g)}} \dif g
\leq
\sqrt{\iint \abs{\innerp{u(g)}{u(h)}} \dif g \dif h}.
\]
\end{lemma}

For a function $F:G^{k}\to\R$, $k\geq 1$, the $k$-variable Gowers box norm is defined by
\[
\norm{F}_{\square^{k}}^{2^{k}}
=
\int \prod_{\vec\epsilon \in \Set{0,1}^{[1,k]}} F(\vec x^{\epsilon}_{(i)})
\dif (x_{j,\epsilon})_{j\in [1,k],\epsilon\in\Set{0,1}},
\]
where $\vec x^{\epsilon}_{j} = x_{j,\epsilon_{j}}$ and $[1,k]=\Set{1,\dotsc,k}$.
See e.g.\ \cite{MR2931680} for a discussion of the basic properties of these norms.

Recall a version of the (weak) weighted hypergraph regularity lemma \cite[Lemma 2.9]{MR2259060}.
This particular version can be found in \cite[Corollary 6.8]{MR2359471} for $k=2$, and the proof for general $k$ is similar.
\begin{lemma}[Weak regularity lemma]
\label{lem:regularity}
For every $k\in\N$ and $\epsilon>0$ there exists $M\in\N$ such that every measurable function $F : G^{k}\to [-1,1]$ can be written
as $F=F_{s}+F_{u}$, where
\begin{enumerate}
\item $F_{s}$ is measurable with respect to $\vee_{j=1}^{k} B_{j}$, where each $B_{j}$ is a $\sigma$-algebra on $G^{k}$ generated by at most $M$ atoms that does not depend on the $j$-th coordinate,
\item $\norm{F_{u}}_{\square^{k}} \leq\epsilon$, and
\item $\abs{F_{s}},\abs{F_{u}}\leq 2$.
\end{enumerate}
\end{lemma}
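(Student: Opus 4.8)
The plan is to prove Lemma~\ref{lem:regularity} by the standard energy-increment (Frieze--Kannan-type) argument adapted to the box norm $\|\cdot\|_{\square^{k}}$. Throughout one works with tuples $\mathcal{B}=(B_{1},\dots,B_{k})$ of finitely generated $\sigma$-algebras on $X^{k}$ in which $B_{j}$ does not depend on the $j$-th coordinate, and one monitors the energy $\mathrm{en}(\mathcal{B}):=\|\E(F\mid B_{1}\vee\dots\vee B_{k})\|_{L^{2}}^{2}\in[0,1]$. Starting from the trivial tuple, at stage $t$ set $F_{s}:=\E(F\mid B_{1}^{(t)}\vee\dots\vee B_{k}^{(t)})$ and $F_{u}:=F-F_{s}$; if $\|F_{u}\|_{\square^{k}}\le\epsilon$ we stop and output this decomposition, so assume $\|F_{u}\|_{\square^{k}}>\epsilon$.

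The heart of the matter is the dual-function identity for the box norm. Expanding $\|F_{u}\|_{\square^{k}}^{2^{k}}$ over the $2k$ copies of $X$ and pulling out the single factor indexed by $\vec 1\in\{0,1\}^{k}$ gives
\[
\|F_{u}\|_{\square^{k}}^{2^{k}}=\int_{X^{k}}F_{u}(\vec x)\,\mathcal{D}F_{u}(\vec x)\,\dif\vec x,
\qquad
\mathcal{D}F_{u}(\vec x)=\int_{X^{k}}\prod_{\vec\epsilon\in\{0,1\}^{k}\setminus\{\vec 1\}}F_{u}\bigl(z^{\vec\epsilon}(\vec x,\vec y)\bigr)\,\dif\vec y,
\]
where the $j$-th coordinate of $z^{\vec\epsilon}(\vec x,\vec y)$ equals $x_{j}$ if $\epsilon_{j}=1$ and $y_{j}$ if $\epsilon_{j}=0$. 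For every $\vec\epsilon\ne\vec 1$ fix a coordinate $j(\vec\epsilon)$ with $\epsilon_{j(\vec\epsilon)}=0$; grouping the $2^{k}-1$ factors of $\mathcal{D}F_{u}$ according to the value of $j(\vec\epsilon)$ rewrites this as $\mathcal{D}F_{u}(\vec x)=\int_{\vec y}\prod_{j=1}^{k}P_{j}(\vec x_{(j)},\vec y)$, where each $P_{j}$ is $C_{k}$-bounded (because $|F_{u}|\le 2$) and does not involve $x_{j}$. Averaging this identity over $\vec y$ produces a point $\vec y^{*}$ for which the $C_{k}$-bounded functions $h_{j}:=P_{j}(\cdot,\vec y^{*})$, each omitting the $j$-th coordinate, satisfy $\int_{X^{k}}F_{u}\prod_{j=1}^{k}h_{j}\ge\|F_{u}\|_{\square^{k}}^{2^{k}}>\epsilon^{2^{k}}$. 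Rounding each $h_{j}$ to a step function $\bar h_{j}$ with $C(k,\epsilon)$ values and sufficiently small sup-norm error keeps the correlation above $\tfrac12\epsilon^{2^{k}}=:\delta$, and $\sigma(\bar h_{j})$ is generated by $C(k,\epsilon)$ atoms that do not involve the $j$-th coordinate.

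Now refine by putting $B_{j}^{(t+1)}:=B_{j}^{(t)}\vee\sigma(\bar h_{j})$ and $\mathcal{C}:=\bigvee_{j}B_{j}^{(t+1)}$, so that $\prod_{j}\bar h_{j}$ is $\mathcal{C}$-measurable and $\bigvee_{j}B_{j}^{(t)}\subset\mathcal{C}$. Then $\E(F\mid\mathcal{C})-F_{s}$ is $L^{2}$-orthogonal to $F_{s}$, while $\langle\E(F\mid\mathcal{C})-F_{s},\prod_{j}\bar h_{j}\rangle=\langle F_{u},\prod_{j}\bar h_{j}\rangle\ge\delta$; since $\|\prod_{j}\bar h_{j}\|_{\infty}\le C_{k}$, Pythagoras gives $\mathrm{en}(\mathcal{B}^{(t+1)})\ge\mathrm{en}(\mathcal{B}^{(t)})+(\delta/C_{k})^{2}$. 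As $\mathrm{en}(\mathcal{B})\le\|F\|_{L^{2}}^{2}\le 1$ for every $\mathcal{B}$, the process must stop after $T\le C_{k}^{2}\delta^{-2}=C(k,\epsilon)$ stages, at which point each $B_{j}^{(T)}$ is generated by at most $M:=C(k,\epsilon)^{T}$ atoms, which gives the required $M$. Finally $F_{s}=\E(F\mid\bigvee_{j}B_{j}^{(T)})$ is $[-1,1]$-valued, whence $|F_{s}|\le 2$ and $|F_{u}|=|F-F_{s}|\le 2$, and all three conclusions hold.

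I expect the only genuinely non-routine point to be the regrouping of $\mathcal{D}F_{u}$ into the ``partite'' product $\prod_{j}P_{j}(\vec x_{(j)},\vec y)$ --- that is, extracting from a large box norm a correlating function that is a product of $k$ factors, each omitting one prescribed coordinate, exactly as the structure of $\bigvee_{j}B_{j}$ demands; the dual-function identity, the rounding estimate, and the energy-increment bookkeeping are all routine. For $k=2$ one may instead pass to a singular value decomposition of $F_{u}$ regarded as an integral kernel on $X\times X$ and take its leading singular vectors, which is more transparent but less indicative of the general pattern.
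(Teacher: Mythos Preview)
The paper does not supply its own proof of this lemma; it simply cites \cite[Lemma~2.9]{MR2259060} and, for $k=2$, \cite[Corollary~6.8]{MR2359471}, remarking that the general case is similar. Your energy-increment argument via the dual function of the box norm is exactly the standard proof underlying those references, and it is correct as written (the one mildly delicate step---selecting a single $\vec y^{*}$ by pigeonhole so that the slice functions $h_{j}=P_{j}(\cdot,\vec y^{*})$ are measurable and correlate with $F_{u}$---is handled by Fubini on a positive-measure set, as you implicitly use).
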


\section{Multiple weak mixing}
Theorem~\ref{thm:multiple-weak-mixing} is proved by induction on $k$ following these steps:
\begin{enumerate}
\item\label{step:estimate} Prove a Gowers box norm estimate for the average in question.
\item Apply the hypergraph regularity lemma to split one of the functions into a structured and a quasirandom part.
\item Estimate the quasirandom part using step~\ref{step:estimate} and the structured part using the inductive hypothesis.
\end{enumerate}
Step~\ref{step:estimate} in this plan is given by the following estimate.

\begin{proposition}
\label{prop:box-norm-controls-multicorrelations}
Let $G$ be a compact $D$-quasirandom group and $k\geq 1$.
Then for every tuple of functions $f_{i}: G^{k} \to [-1,1]$, $i\in [0,k]$, we have
\[
\int_{G} \abs[\Big]{ \int_{G^{k}} \prod_{i=0}^{k} f_{i}T_{[1,i]}^{g} } \dif g
\leq
\min_{i} \norm{f_{i} N^{[0,k]}_{i}}_{\square^{k}} + C_{k}D^{-2^{-k}}.
\]
\end{proposition}

\begin{proof}
By induction on $k$.
For $k=1$ the box norm is just the absolute value of the integral, so writing
\[
\int_{G} \abs[\Big]{ \int_{G^{1}} \prod_{i=0}^{1} f_{i}T_{[1,i]}^{g} } \dif g
\leq
\int_{G} \abs[\Big]{ \int_{G^{1}} f_{0} \cdot f_{1}T_{1}^{g} - \int f_{0} \int f_{1} } \dif g
+
\abs[\Big]{\int f_{0}} \abs[\Big]{\int f_{1}}
\]
we can estimate the second term by the minimum of the box norms.
In the first term we apply Jensen's inequality and Lemma~\ref{lem:density-mean-ergodic} with the Hilbert space $L^{2}(G)$ and the unitary right $G$-action $(g,f) \mapsto fT_{1}^{g}$.
Since the invariant subspace of this action consists only of the constant functions, the projection onto this subspace amounts to integration over $G$.

Suppose now $k>1$ and the claim is known for $k-1$.
Applying $T^{g^{-1}}_{[1,k]}$ to the function in the inner integral and reversing the order of the indices $0,\dotsc,k$ we see that the bound with $f_{0}$ follows from the bound with $f_{k}$, so it suffices to establish bounds with $f_{1},\dotsc,f_{k}$.

Applying Lemma~\ref{lem:vdC} with $X=G$, $V=L^{2}(G^{k})$, $v=f_{0}$, and $u(g)=\prod_{i=1}^{k} f_{i}T_{[1,i]}^{g}$, we estimate the square of the left-hand side of the conclusion by
\[
\int_{h}\int_{g} \abs[\Big]{ \int_{G^{k}} \prod_{i=1}^{k} f_{i}T_{[1,i]}^{g} \cdot f_{i}T_{[1,i]}^{h} }
=
\int_{h}\int_{g} \abs[\Big]{ \int_{G^{k}} \prod_{i=1}^{k} (f_{i} \cdot f_{i}T_{[1,i]}^{h})T_{[2,i]}^{g} }.
\]
In the last step we have made the change of variables $(g,h)\mapsto (g,hg)$ on $G^{2}$ and used the fact that $T_{1}^{g}$ is a measure-preserving transformation.
Pulling one of the integrals out of the absolute value we obtain the estimate
\[
\int_{h}\int_{g} \int_{a_{1}}\abs[\Big]{ \int_{a_{2},\dotsc,a_{k}} \prod_{i=1}^{k} (f_{i} \cdot f_{i}T_{[1,i]}^{h})T_{[2,i]}^{g} (a_{1},\dotsc,a_{k}) }.
\]
Applying the inductive hypothesis for each fixed pair $(h,a_{1})$, for any $i\in [1,k]$ we obtain the estimate
\[
\int_{h} \int_{a_{1}} \norm{(f_{i} \cdot f_{i}T_{[1,i]}^{h})(a_{1},N^{[1,k]}_{i}\cdot)}_{\square^{k-1}} + C_{k-1} D^{-2^{-k+1}},
\]
where $N^{[1,k]}_{i}(x_{1},\dotsc,x_{i-1},x_{i+1},\dotsc,x_{k})$ is defined similarly to $N^{[0,k]}_{i}$.

The contribution of the second summand is admissible, so we only have to consider the first summand.
Raising it to the power $2^{k-1}$ and applying Jensen's inequality we obtain the bound
\[
\int_{h,a_{1}} \norm{(f_{i} \cdot f_{i}T_{[1,i]}^{h})(a_{1},N^{[1,k]}_{i} \cdot)}_{\square^{k-1}}^{2^{k-1}}.
\]
Expanding the definition of the box norm and observing that
\[
T_{[1,i]}^{h}(a_{1},N^{[1,k]}_{i} \vec x_{(i)})
=
N^{[0,k]}_{i}(ha_{1},a_{1}^{-1}x_{1},x_{2},\dotsc,x_{i-1},x_{i+1},\dotsc,x_{k})
\]
we can write the above expression in the form
\begin{multline*}
\int_{h,a_{1}} \int
\prod_{\vec\epsilon\in\Set{0,1}^{[1,k]\setminus \Set{i}}}
f_{i}N^{[0,k]}_{i}(a_{1}, a_{1}^{-1} x_{1,\epsilon_{1}}, x_{2,\epsilon_{2}},\dotsc,x_{k,\epsilon_{k}})\\
\cdot f_{i}N^{[0,k]}_{i}(ha_{1}, a_{1}^{-1} x_{1,\epsilon_{1}}, x_{2,\epsilon_{2}},\dotsc,x_{k,\epsilon_{k}})
\dif (x_{j,\epsilon})_{j\in [1,k]\setminus \Set{i}, \epsilon\in\Set{0,1}}.
\end{multline*}
With the change of variables $(a_{1},x_{1,0},x_{1,1})\mapsto (a_{1},a_{1}x_{1,0},a_{1}x_{1,1})$ this becomes
\[
\int_{h,a_{1}} \int
\prod_{\epsilon} f_{i}N^{[0,k]}_{i}(a_{1}, \vec x^{\epsilon}_{(i)})
\cdot
\prod_{\epsilon} f_{i}N^{[0,k]}_{i}(ha_{1}, \vec x^{\epsilon}_{(i)})
\dif (x_{j,\epsilon})_{j\in [1,k]\setminus \Set{i}, \epsilon\in\Set{0,1}}.
\]
We interpret the integral in all variables but $h$ as an inner product in $L^{2}(G^{2k-1})$ and the appearance of $h$ in the first argument of the second product as a right unitary action of $G$ on this space.
Applying Lemma~\ref{lem:density-mean-ergodic}, we obtain an admissible error term and the bound
\[
\int_{a_{1}} \int
P\Big(\prod_{\epsilon} f_{i}N^{[0,k]}_{i}\Big)^{2}(a_{1}, \vec x^{\epsilon}_{(i)})
\dif (x_{j,\epsilon})_{j\in [1,k]\setminus \Set{i}, \epsilon\in\Set{0,1}},
\]
where $P$ denotes the projection onto the invariant subspace.
But this projection is nothing else but integration in the variable $a_{1}$, so this can be written as
\[
\int
\Big(\int_{a_{1}} \prod_{\epsilon} f_{i}N^{[0,k]}_{i}(a_{1}, \vec x^{\epsilon}_{(i)}) \Big)^{2}
\dif (x_{j,\epsilon})_{j\in [1,k]\setminus \Set{i}, \epsilon\in\Set{0,1}}.
\]
Relabeling $a_{1}=x_{0,0}$ in the first factor of the square and $a_{1}=x_{0,1}$ in the second factor, we see that this coincides with
\[
\norm{ f_{i}N^{[0,k]}_{i} }_{\square^{k}}^{2^{k}},
\]
and the conclusion follows.
\end{proof}

\begin{proof}[Proof of Theorem~\ref{thm:multiple-weak-mixing}]
By induction on $k$.
The base case $k=0$ is very easy.

Let now $k\geq 1$ and suppose that the result holds for $k-1$.
Let $\epsilon>0$ be arbitrary and apply the weak regularity lemma to the function $f_{k}N^{[0,k]}_{k}$, so that
\[
f_{k}N^{[0,k]}_{k} = \sum_{l\in L}\prod_{i=0}^{k-1}F_{i,l} + F_{u},
\]
where all functions on the right-hand side are uniformly bounded,
the functions $F_{i,l}$ do not depend on the $i$-th and the $k$-th coordinates,
the index set $L$ has size $O_{\epsilon}(1)$,
and $\norm{F_{u}}_{\square^{k}} \leq\epsilon$.
Split the multicorrelation sequence accordingly as
\[
c_{g} = \sum_{l\in L} c_{l,g} + c_{u,g}.
\]
The inverse of the change of variables $N^{[0,k]}_{i}$ is given by
\[
(N^{[0,k]}_{i})^{-1}(\vec a)
=
(a_{1},a_{1}^{-1}a_{2},\dotsc,a_{i-1}^{-1}a_{i},a_{i+1}^{-1}a_{i+2},\dotsc,a_{k-1}^{-1}a_{k},a_{k}^{-1}),
\]
and it can be verified that we have
\[
((N^{[0,k]}_{k})^{-1} T_{[1,k]}^{g})_{(i)} = ((N^{[0,k]}_{k})^{-1} T_{[1,i]}^{g})_{(i)},
\]
thus the actions $T_{[1,k]}$ and $T_{[1,i]}$ coincide on the functions $F_{i,l}$.
This is a common theme in the hypergraph regularity approach to multiple ergodic averages in the work of Austin (although it takes much less effort to exploit this phenomenon in our compact group setting than in the setting of infinite amenable groups).
Since the maps $f\mapsto fT_{[1,i]}^{g}$ are algebra homomorphisms, it follows that
\[
c_{l,g} = \int_{G^{k}} \prod_{i=0}^{k-1} (f_{i} \cdot F_{i,l} (N^{[0,k]}_{k})^{-1}) T_{[1,i]}^{g}.
\]
This is an average (in the last coordinate of $G^{k}$) of multicorrelation sequences of length $k-1$, so its total variation is bounded by $o_{D\to\infty;k-1}(1)$ by the inductive hypothesis.
On the other hand, we have
\[
\int_{g} \abs{c_{u,g}} \leq \epsilon + o_{D\to\infty;k}(1)
\]
by Proposition~\ref{prop:box-norm-controls-multicorrelations}.
This shows that the total variation of the multicorrelation sequence $c_{g}$ can be estimated by
\[
\card{L} o_{D\to\infty;k-1}(1) + \epsilon + o_{D\to\infty;k}(1)
=
\epsilon + o_{D\to\infty;k,\epsilon}(1).
\]
Since $\epsilon>0$ was arbitrary this concludes the proof.
\end{proof}

\printbibliography
\end{document}
